\numberwithin{equation}{section}
\newtheorem{Thm}[subsection]{Theorem}
\newtheorem{Lem}[subsection]{Lemma}
\newtheorem{Cor}[subsection]{Corollary}
\newtheorem{Exm}[subsection]{Example}
\newcommand{\br}{\mathbb{R}}
\newcommand{\bz}{\mathbb Z}
\newcommand{\wt}{\widetilde}
\begin{document}

\title[Stiefel-Whitney classes]{On Stiefel-Whitney Classes of vector bundles over real Stiefel Manifolds}

\author{Prateep Chakraborty}
\address{Stat-Math Unit, Indian Statistical Institute, 8th Mile, Mysore Road, RVCE Post, Bangalore 560059, INDIA.}
\email{chakraborty.prateep@gmail.com}

\author{ Ajay Singh Thakur}
\address{Stat-Math Unit, Indian Statistical Institute, 8th Mile, Mysore Road, RVCE Post, Bangalore 560059, INDIA.}

\email{thakur@isibang.ac.in}

\thanks{The research of first author is supported by NBHM postdoctoral fellowship.
The research of second author is supported by DST-Inspire Faculty Scheme (IFA-13-MA-26)}

\subjclass{57R20, 57T15}

\keywords{Stiefel-Whitney class, Stiefel manifold, stunted projective space.}

\begin{abstract} In this article we show that there are at most two integers up to $2(n-k)$, which can occur as the degrees of nonzero Stiefel-Whitney classes of vector bundles over the Stiefel manifold $V_k(\br^n)$. In the case when $n> k(k+4)/4$, we show that if $w_{2^q}(\xi)$ is the first nonzero Stiefel-Whitney class of a vector bundle $\xi$ over $V_k(\br^n)$ then $w_t(\xi)$ is zero if $t$ is not a multiple of $2^q.$ In addition, we give relations among Stiefel-Whitney classes whose degrees are multiples of $2^q$.
\end{abstract}
\maketitle

\section{Introduction}The real Stiefel manifold $V_k(\br^n)$ is the set of all orthonormal $k$-frames in $\br^n$ and it can be identified with the homogeneous space $SO(n)/SO(n-k)$. The main aim of this article is to study Stiefel-Whitney classes of vector bundles over a real Stiefel manifold. 

Recall that the degree of the first nonzero Stiefel-Whitney class of a vector bundle over a CW-complex $X$ is a power of $2$ (cf., for example, \cite[page 94]{milnor}). In the case when $X$ is a $d$-dimensional Sphere $S^d$, it is a theorem of Atiyah-Hirzebruch \cite[Theorem 1]{atiyah} that $d$ can occur as the degree of a nonzero Stiefel-Whitney class of a vector bundle over $S^d$ if and only if $d=1,2,4,8.$ The possible Stiefel-Whitney classes of vector bundles over Dold manifold and stunted real projective space are completely determined by Stong \cite{stong} and Tanaka \cite{tanaka}, respectively. In this article we shall deal with case $X = V_k(\br^n)$ and derive certain results on Stiefel-Whitney classes. 

In \cite{knt}, it was observed that for a vector bundle $\xi$ over $V_k(\br^n)$, $n>k$, the Stiefel-Whitney class $w_{n-k}(\xi) = 0$ if $n-k \neq 1,2,4,8$ and $w_{n-k+1}(\xi) = 0$ if $n-k = 2,4,8$. We extend this observation to get the following theorem  where we show that there are at most two integers up to $2(n-k)$, which can occur as the degrees of nonzero Stiefel-Whitney classes of any vector bundle over $V_k(\br^n)$.
\begin{Thm}\label{tanaka}
 Let $\xi$ be a vector bundle over $V_k(\br^n), n> k$. Let $i$ be a positive integer with $i\leq 2(n-k)$.  Then $w_i(\xi)=0$ if one of the following conditions is satisfied.
 \begin{enumerate}
 \item $n-k \neq 1,2,4,8$ and   $i \neq 2^{\varphi(n-k-1)}$ 
 \item $n-k = 1,2,4,8$ and $i \neq n-k, 2(n-k)$.
 
 \end{enumerate}  \end{Thm}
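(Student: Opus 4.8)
The plan is to exploit the structure of the mod‑2 cohomology ring $H^*(V_k(\br^n);\bz/2)$ together with the fact — recalled in the introduction from Milnor — that the degree of the first nonzero Stiefel-Whitney class of any vector bundle is a power of $2$. Write $q$ for that power, so that $w_{2^q}(\xi)$ is the first nonzero class. The key point is that $H^*(V_k(\br^n);\bz/2)$, as computed classically (it is generated by classes in degrees $n-k, n-k+1,\dots, n-1$ subject to the usual relations, and is closely related to a stunted projective space in a range), has the property that in low degrees — specifically up to degree $2(n-k)$ — it is quite sparse: the only degrees in which it can be nonzero are $0$, then $n-k$, $n-k+1,\dots$ in a controlled pattern. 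So I would first pin down exactly which cohomology groups $H^i(V_k(\br^n);\bz/2)$ are nonzero for $1 \le i \le 2(n-k)$: the answer should be that the first nonzero one after degree $0$ is in degree $n-k$, and the structure of the Steenrod squares $Sq^j$ acting on the bottom class $x_{n-k} \in H^{n-k}$ is governed by the formula $Sq^j x_{n-k} = \binom{n-k}{j} x_{n-k+j}$, exactly as for real projective space $\brp^{n-1}/\brp^{n-k-1}$ in this range.

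**The core argument via Steenrod operations and the Wu-type constraint.**

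Since $w_{2^q}(\xi)$ is the first nonzero Stiefel-Whitney class, $w_{2^q}(\xi) = c\, x_{n-k}$ forces $2^q = n-k$ whenever $x_{n-k}$ is the bottom class — but in general $2^q$ need not equal $n-k$, so more care is needed: the right statement is that $w_{2^q}(\xi)$ is a nonzero element of $H^{2^q}(V_k(\br^n);\bz/2)$, hence $2^q$ must be a degree in which the cohomology is nonzero, i.e. $2^q \geq n-k$. Now I would invoke the Wu formula $Sq^i(w_j) = \sum_t \binom{j-i+t-1}{t} w_{i-t} w_{j+t}$ applied to the first nonzero class $w_{2^q}$: because all lower Stiefel-Whitney classes vanish, $Sq^i w_{2^q} = \binom{2^q - i}{i}^{?}\!\cdots$ reduces to a single term, and the nonvanishing of the relevant binomial coefficient (a standard mod‑2 computation using that $2^q$ is a power of $2$) shows $Sq^{2^q} w_{2^q} = w_{2^q}^2 \ne 0$, placing a nonzero class in degree $2^{q+1}$. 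Iterating, one produces nonzero classes — and more importantly, the interaction with the ring structure of $H^*(V_k(\br^n);\bz/2)$ in the range up to $2(n-k)$ (where, crucially, products $x_{n-k}\cdot x_j$ land in degree $\geq 2(n-k)$) forces $2^q = n-k$ unless $n-k$ itself fails to be a power of $2$, in which case one is driven to $2^q = 2^{\varphi(n-k-1)}$, where $\varphi(m)$ denotes the number of integers $s$ with $0 < s \le m$ and $s \equiv 0,1,2,4 \pmod 8$ — the classical Radon–Hurwitz/Adams function controlling vector fields and the James number, which is exactly the invariant that appears when one asks in which dimension the stunted projective space first carries a nonzero Stiefel-Whitney class.

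**Handling the two cases and the main obstacle.**

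For case (2), when $n-k \in \{1,2,4,8\}$: here $\varphi(n-k-1)$ equals $\log_2(n-k)$, so $2^{\varphi(n-k-1)} = n-k$ is itself admissible as a first nonzero degree, and one must additionally rule out all degrees $i$ with $n-k < i < 2(n-k)$ and $i \ne 2(n-k)$. This is done by the same Steenrod-square bookkeeping: if $w_i(\xi) \ne 0$ with $i$ not a power of $2$, write $i = 2^a + (\text{lower})$, split off $w_{2^a}$ as forced by Milnor's theorem applied after passing to the appropriate skeleton, and derive a contradiction with sparseness of the cohomology; the degrees $n-k$ and $2(n-k)$ survive precisely because $n-k \in \{1,2,4,8\}$ is where the obstruction-theoretic Hopf-invariant phenomenon allows a genuine bundle (the normal bundle data, or the pullback of the Hopf bundle) to realize a class there. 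For case (1), when $n-k \notin \{1,2,4,8\}$, the analysis collapses to the single surviving degree $2^{\varphi(n-k-1)}$. I expect the main obstacle to be the bookkeeping at the boundary of the range: showing that no Stiefel-Whitney class in a degree strictly between $2^{\varphi(n-k-1)}$ and $2(n-k)$ (other than, in case (2), $2(n-k)$ itself) can be nonzero requires a careful simultaneous use of (i) the multiplicative structure of $H^*(V_k(\br^n);\bz/2)$ truncated at degree $2(n-k)$, (ii) the Wu formula expressing $Sq^j w_i$ in terms of products of Stiefel-Whitney classes, and (iii) the vanishing of all classes below the first nonzero one — the subtlety is that the truncation range $2(n-k)$ is exactly tight enough that products of two bottom-dimensional classes are only just excluded, so every inequality must be tracked precisely, and one likely needs the precise description of $H^*(V_k(\br^n);\bz/2)$ as agreeing with that of the stunted projective space $\brp^{n-1}/\brp^{n-k-1}$ through dimension $2(n-k)-1$, which is where the comparison with Tanaka's results on stunted projective spaces enters.
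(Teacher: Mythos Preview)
Your proposal buries the essential idea under an approach that cannot work on its own. The bulk of your argument --- applying Wu's formula and Steenrod squares to the Stiefel--Whitney classes of $\xi$, iterating $Sq^{2^q}$, and tracking products in $H^*(V_k(\br^n);\bz_2)$ --- will not by itself force the first nonzero class to sit in degree $2^{\varphi(n-k-1)}$ rather than in some other power of $2$ in the range $[n-k,\,2(n-k)]$. The appearance of the Adams function $\varphi$ is a $K$-theoretic phenomenon, not a purely cohomological one. For a concrete failure: take $n-k=10$, so $\varphi(9)=5$ and the theorem asserts $w_i(\xi)=0$ for every $i\le 20$; in particular $w_{16}(\xi)=0$. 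But $16$ is a power of $2$ lying in $[10,20]$, and $H^{16}(V_k(\br^n);\bz_2)=\bz_2\langle a_{16}\rangle$ is nonzero once $n\ge 17$, so nothing in your Wu-formula bookkeeping rules it out. (Your auxiliary claim that $Sq^{2^q}w_{2^q}=w_{2^q}^2\ne 0$ is also not generally valid: $a_{n-k}^2=0$ whenever $2(n-k)>n-1$.) You hand-wave ``one is driven to $2^q=2^{\varphi(n-k-1)}$'' at exactly the point where the real content lies.

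What actually carries the proof is the ingredient you relegate to your final sentence: the comparison with the stunted projective space $P_{n,k}=\brp^{n-1}/\brp^{n-k-1}$. In the paper this is the entire argument, not a boundary bookkeeping device. There is a cellular embedding $f:P_{n,k}\hookrightarrow V_k(\br^n)$ with the pair $(V_k(\br^n),P_{n,k})$ being $2(n-k)$-connected (James), so $f^*$ is injective on $H^j$ for $j\le 2(n-k)$, and it suffices to show $w_j(f^*\xi)=0$. Now one invokes Adams' computation that the image of $\wt{KO}(P_{n,k})\to \wt{KO}(\brp^{n-1})$ is generated by $2^{\varphi(n-k-1)}\gamma$; hence every bundle pulled back from $P_{n,k}$ has total Stiefel--Whitney class a power of $(1+t^{2^{\varphi(n-k-1)}})$, and nonzero classes occur only in degrees divisible by $2^{\varphi(n-k-1)}$. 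Combined with the elementary inequality $2^{\varphi(m-1)}\ge m$ (equality exactly for $m=1,2,4,8$), the only such multiples in $[n-k,2(n-k)]$ are $2^{\varphi(n-k-1)}$ itself and, in the four exceptional cases, also $2(n-k)$. No Wu formulas or Steenrod iterations on $\xi$ are needed here; those tools belong to the proof of Theorem~\ref{main}, not this one.
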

In the above theorem, $\varphi(m)$,  for a non-negative integer $m$,  is the number of integers $l$ such that $0<l\leq m$ and $l\equiv 0,1,2,4 \pmod 8$.

From Theorem \ref{tanaka}, we observe that if $i$ is the first nonzero Stiefel-Whitney class of a vector bundle $\xi$ over $V_k(\br^n)$ and $i\leq2(n-k)$, then $i$ is of the  form $2^{\varphi(n-k-1)}.$ Now in the next theorem,  for a vector bundle over $V_k(\br^n)$, we derive  vanishing of certain  Stiefel-Whitney classes whose degrees depend on the degree of the first nonzero Stiefel-Whitney class.
\begin{Thm}\label{main}
	 Let $n > k(k+4)/4$. Let $\xi$ be a vector bundle over $V_k(\mathbb{R}^n)$ with first non-zero Stiefel Whitney class in degree $2^q$. If $i$ is a multiple of $2^q$ and is written as $i =  2^{q+t_1}+2^{q+t_2}+\cdots + 2^{q+t_m} \mbox{ with } t_j\geq0 \mbox{ and } t_j<t_{j+1}$, then $w_i(\xi) = w_{2^{q+t_1}}(\xi)\cdot w_{2^{q+t_2}}(\xi)\cdots w_{2^{q+t_m}}(\xi)$. Further, if $i$ is not a multiple of $2^q$, then $w_i(\xi) = 0$.
	 \end{Thm}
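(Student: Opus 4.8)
The plan is to work entirely inside $H^*(V_k(\br^n);\bz/2)$ as a module over the Steenrod algebra and to bootstrap with the Wu formula. Recall first that $H^*(V_k(\br^n);\bz/2)$ has a simple system of generators $a_{n-k},\dots,a_{n-1}$ ($\deg a_j=j$), pulled back from $SO(n)$, with $a_j^2=a_{2j}$ (convention $a_l=0$ for $l\ge n$) and $Sq^s a_j=\binom{j}{s}a_{s+j}$, the last identity being the Wu formula read off from the transgression of $w_{j+1}$. Since $n>k(k+4)/4$ forces $2(n-k)>n-1$, every $a_j^2=0$, so the ring is the exterior algebra $\Lambda(a_{n-k},\dots,a_{n-1})$ and the squarefree monomials form a basis; call the number of factors of such a monomial its \emph{length}. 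A length-$r$ monomial has degree in the interval $[\,r(n-k)+\binom r2,\ r(n-1)-\binom r2\,]$ of length $r(k-r)\le k^2/4$, and since $n>k(k+4)/4>(k+1)^2/4$ these intervals are pairwise disjoint, so every homogeneous degree carries a well-defined length; moreover $Sq$ sends a squarefree monomial to a sum of squarefree monomials of the same length, so $Sq$ preserves this length grading. I would record all of this in a preliminary lemma.

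Since $H^j(V_k(\br^n);\bz/2)=0$ for $0<j<n-k$ we have $2^q\ge n-k$ and $w_j(\xi)=0$ for $j<2^q$. Feeding this into the Wu formula $Sq^{s}(w_{t})=\sum_i\binom{t-s+i-1}{i}w_{s-i}w_{t+i}$ and using $\binom{2^qc-1}{a}\equiv1\pmod2$ for $0\le a<2^q$, every product term drops out, and one gets, for any multiple $2^qc$ of $2^q$ and any $0<a<2^q$,
\[
w_{2^qc+a}(\xi)=Sq^{a}\big(w_{2^qc}(\xi)\big);
\]
in particular $w_i(\xi)=Sq^{\,i-2^qc}\big(w_{2^qc}(\xi)\big)$ when $2^qc$ is the largest multiple of $2^q$ not exceeding $i$. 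The engine of the proof is then the \emph{key vanishing}: $Sq^{b}\big(w_{2^p}(\xi)\big)=0$ for every $p\ge q$ and every $0<b<2^q$. When $2^p+b\le 2(n-k)$ this is immediate from Theorem~\ref{tanaka}, since $2^p+b$, not being a power of $2$, differs from $2^{\varphi(n-k-1)}$ (and from $n-k$, $2(n-k)$). For larger degrees, $Sq^b(w_{2^p}(\xi))$ has the same length $r$ as $w_{2^p}(\xi)$ and lies in degree $2^p+b$, so it vanishes as soon as $2^p+b$ exceeds the top degree $r(n-1)-\binom r2$ of length $r$; the values of $b$ left over fill an interval of length at most $r(k-r)\le k^2/4<n-k\le 2^q$, and for these one must exploit that $w_{2^p}(\xi)$ is a Stiefel--Whitney class, not an arbitrary length-$r$ element — e.g.\ by restricting $\xi$ along the skeletal inclusion into $V_k(\br^n)$ of the relevant stunted projective space (when $r=1$), resp.\ of its length-$r$ analogue, feeding in the corresponding realizability statement in the spirit of Tanaka, and closing with a binomial-coefficient computation.

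Granting this, I would run an induction on the degree $i$. If $i$ is not a multiple of $2^q$, let $2^qc$ be the largest multiple of $2^q$ below $i$; then $w_i(\xi)=Sq^{\,i-2^qc}\big(w_{2^qc}(\xi)\big)$, and since $w_{2^qc}(\xi)=\prod_t w_{2^{q+t}}(\xi)$ over the binary digits $t$ of $c$ by induction, the Cartan formula expands this into a sum of products of factors $Sq^{b_t}\big(w_{2^{q+t}}(\xi)\big)$ with $0\le b_t<2^q$ and $\sum_t b_t=i-2^qc>0$; some $b_t$ is positive in each term, so every term vanishes by the key vanishing and $w_i(\xi)=0$. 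If instead $i=2^qc$ with $c=2^{t_1}+\cdots+2^{t_m}$, $t_1<\cdots<t_m$, apply the Wu formula to $w_{2^{q+t_m}}(\xi)$ with $Sq^{2^qc'}$, where $c'=c-2^{t_m}<2^{t_m}$ (so $\binom{2^{q+t_m}-1}{2^qc'}\equiv1$); using $\binom{2^qx-1}{2^qy}\equiv\binom{x-1}{y}\pmod2$, the inductive product formulas for the lower-degree terms, and the key vanishing for the Steenrod terms, the resulting identity collapses to $w_{2^qc}(\xi)=w_{2^qc'}(\xi)\cdot w_{2^{q+t_m}}(\xi)$, whence the stated product by induction on $m$.

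The hard part is the non-degree portion of the key vanishing — showing $Sq^b(w_{2^p}(\xi))=0$ for the codegrees $b$ that neither Theorem~\ref{tanaka} nor the length bound disposes of — where the bundle itself, through a geometric realizability input over the appropriate skeleta of $V_k(\br^n)$, must be used. This is exactly where the hypothesis $n>k(k+4)/4$ is essential: it makes the length grading well defined (already $n>(k+1)^2/4$ suffices for that) and, via $k^2/4<n-k\le 2^q$, confines the uncontrolled range of codegrees to fewer than $2^q$ values. Everything else is the routine but delicate binomial-coefficient bookkeeping in the induction.
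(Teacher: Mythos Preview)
Your framework is the paper's: the exterior-algebra structure (since $n>k(k+4)/4$ gives $n\ge 2k$), the length grading via the disjoint intervals $T_r$, length-preservation by $Sq$, and the Wu recursion. You are also right that cases (a) and (b) already dispose of the key vanishing $Sq^b(w_{2^p}(\xi))=0$ whenever $2^p\in T_1$. The genuine gap is your case (c): $2^p\in T_r$ with $r\ge2$ and $0<b\le\max T_r-2^p<n-k$. There you invoke an unspecified ``length-$r$ analogue'' of the stunted-projective-space realizability argument; no such input is available for $r\ge2$, and the paper does not use one.

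The paper fills this gap purely algebraically, by organizing the induction differently. It never isolates a ``key vanishing''. For the non-multiple statement it writes $i=2^ts$ with $s$ odd and uses $w_i=Sq^{2^t}(w_{i-2^t})$; this reduces by induction unless $i-2^t=2^p$ is a single power of $2$ with $p\ge q$, i.e.\ $i=2^p+2^t$ with $t<q\le p$ --- exactly your case (c). The trick, when $p\ge t+2$, is to go one step higher and apply Wu's formula as $Sq^{2^{t+1}}(w_{2^p-2^t})$: since $2^p-2^t$ is \emph{again} not a multiple of $2^q$ and has smaller degree, $w_{2^p-2^t}=0$ by induction, so the left side is $0$; on the right side every term $w_{2^{t+1}-r}w_{\cdots}$ with $0<r<2^{t+1}$ vanishes because $2^{t+1}-r<2^q$, and the coefficient of $w_i$ is $\binom{2^p-2^t-1}{2^{t+1}}$, which is odd since $2^p-2^t=2^t(2^{p-t}-1)$ with $2^{p-t}-1\equiv3\pmod4$. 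This forces $w_i=0$. The residual base case $p=t+1$ means $i=3\cdot2^{q-1}$ with $p=q$; there either $2^q\in T_1$, so $w_{2^q}=a_{2^q}$ and $Sq^{2^{q-1}}(a_{2^q})=0$ (or quote Corollary~\ref{stunted}), or $2^q\in T_r$ with $r\ge2$, whence $2^{q-1}>n-k$ and your length argument (b) already applies. No geometry beyond Theorem~\ref{tanaka} enters.

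A smaller point: for the product formula the paper peels off the \emph{smallest} summand, computing $Sq^{2^{q+t_1}}(w_{i-2^{q+t_1}})$. Because $i-2\cdot2^{q+t_1}=2^{q+t_1}\cdot(\text{odd})$, the identity $\binom{2^{q+t_1}\cdot\text{odd}+r-1}{r}\equiv0$ for $0<r<2^{q+t_1}$ kills every Wu cross-term at once, leaving exactly $w_{2^{q+t_1}}w_{i-2^{q+t_1}}+w_i$, while the left side vanishes by the length bound since $2^{q+t_1}\ge n-k$. Your top-bit version $Sq^{2^qc'}(w_{2^{q+t_m}})$ leaves cross-terms $w_{2^q(c'-d)}w_{2^q(2^{t_m}+d)}$ that require the second statement in lower degrees plus further bookkeeping; the paper's ordering (first statement proved cleanly and independently, then used in the second) is tidier.
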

	 
	  Recall (\cite{naolekar}) that $\mbox{ucharrank}(X)$ of $X$ is the maximal degree up to which every cohomology class of $X$ is a polynomial in the Stiefel-Whitney classes of a vector bundle over $X$. The \textit{ucharrank} of $V_k(\br^n)$ was computed
	  in \cite{knt}, except for the cases $n-k = 4,8$, in which cases it was shown that $\mbox{ucharrank}(V_k(\br^n))$ is bounded above by $n-k$. In Example \ref{exmp}, we construct a vector bundle $\xi$ over $V_k(\br^n)$, when $n-k = 4,8$,  such that $w_{n-k}(\xi) \neq 0$ and hence improve the result in \cite{knt} to obtain $\mbox{ucharrank}(V_k(\br^n)) = n-k$.

	 To prove our results we need the Steenrod algebra action on the mod-2 cohomology ring $H^*(V_k(\br^n);\bz_2)$. 
	 Recall \cite[Proposition 9.1 and 10.3]{borel} that the cohomology ring $H^*(V_k(\mathbb R^n);\bz_2)$ has a simple system of generators
	 	$a_{n-k}, a_{n-k+1},\ldots , a_{n-1}$, where $a_i\in H^i(V_k(\mathbb R^n)$ with the following
relations:	 	$$a_i^2= \left \{\begin{array}{cl} a_{2i} & \mbox{ if } 2i \leq n-1 \\0 & \mbox{ otherwise. } \end{array}\right.$$
	 	The action of Steenrod algebra is completely determined by knowing that (see \cite{borel}, Remarque 2 in \S 10):
	 $$Sq^i(a_j)=\left\{\begin{array}{cl}
	 \displaystyle{j \choose i}a_{j+i} & \mbox{if $ j+i\leq n-1$,}\\
	 & \\
	 0 & \mbox{otherwise.}
	 \end{array}\right.$$
	 
\subsection*{Notations} In this article we shall only consider real Stiefel manifold. The cohomology ring will always be with $\bz_2$-coefficients, unless specified otherwise.

%

\section{Proof of Theorem \ref{tanaka}}

We first recall the description of Stiefel-Whitney classes of vector bundles over stunted real projective space, due to Tanaka \cite{tanaka}. For $n>k$, let $P_{n,k}$ be the stunted real projective space obtained from  $\mathbb{RP}^{n-1}$ by collapsing the subspace $\mathbb{RP}^{n-k-1}$ to a point. Consider the following cofibration sequence
$$\mathbb{RP}^{n-k-1}\longrightarrow\mathbb{RP}^{n-1}\stackrel{g}\longrightarrow P_{n,k}.$$
The induced map in cohomology $g^*:H^j(P_{n,k})\rightarrow H^j(\mathbb{RP}^{n-1})$ is an isomorphism when $n-k\leq j\leq n-1$. Therefore, for any vector bundle $\xi$ over $P_{n,k}$, the Stiefel-Whitney class $w_j(\xi)\neq0$ if and only if $w_j(g^*(\xi))\neq0$. From \cite{adams} (also cf. \cite{tanaka}), we know that the image $g^*:\wt{KO}(P_{n,k})\rightarrow \wt{KO}(\mathbb{RP}^{n-1})$ is generated by $2^{\varphi(n-k-1)}\gamma$, where $\gamma$ is the canonical line bundle over $\mathbb{RP}^{n-1}$ and for a non-negative integer $m$,  $\varphi(m)$ is as defined in the Introduction.  If we denote the generator of $H^*(\mathbb{RP}^{n-1})$ by $t$, then for any integer $d$, the total Stiefel-Whitney class of the element $d2^{\varphi(n-k-1)}\gamma$ in the image of $g^*$, is given as
$$w(d2^{\varphi(n-k-1)}\gamma)=(1+t)^{d2^{\varphi(n-k-1)}}=(1+t^{2^{\varphi(n-k-1)}})^d.$$ Therefore, the nonzero Stiefel-Whitney classes of any vector bundle $\xi$ over $P_{n,k}$ can occur only in degrees $r2^{\varphi(n-k-1)}$ for some integer $r$.
 
 To prove Theorem \ref{tanaka}, we shall use the following observation. For a non-negative integer $m$, we note that if $m\equiv 1,2,3,4,5\pmod 8$, then $\varphi(m)=[m/2]+1$ and if $m\equiv 0,6,7\pmod 8$, then $\varphi(m)=[m/2]$. From here we can conclude that for a positive integer $m$, we have $2^{\varphi(m-1)} \geq m$ and the equality holds only if $m = 1,2,4 \mbox{ and } 8$.

%

\begin{proof}[\textbf{Proof of Theorem \ref{tanaka}}]  
Recall (cf. \cite{james}) that there is a cellular embedding $f:P_{n,k}\hookrightarrow V_{k}(\br^n)$ such that the cellular pair $(V_{k}(\br^n), P_{n,k})$ is $2(n-k)$ connected (cf. \cite[Proposition 1.3]{james}).
	Hence,  the induced map in cohomology $f^*:H^j(V_{k}(\br^n)) \rightarrow H^j(P_{n,k})$ is injective for $j\leq 2(n-k)$. Therefore, for a vector 
bundle $\xi$ over $V_k(\br^n)$, the Stiefel-Whitney class $w_j(\xi)\neq0$ if and only if $w_j(f^*(\xi))\neq0$ when $n-k\leq j\leq 2(n-k).$ By the description of Stiefel-Whitney classes of vector bundles over $P_{n,k}$, as discussed above, it follows that $w_j(\xi)=0$ if $n-k\leq j\leq \min\{n-1,2(n-k)\}$ and $j\neq r2^{\varphi(n-k-1)}$ for any integer $r$. As, $2^{\varphi(n-k-1)} \geq (n-k)$ and the equality holds only if $n-k = 1,2,4$, and $8$, the only multiples of $2^{\varphi(n-k-1)}$ that can occur in between $(n-k)$ and $2(n-k)$ are $2^{\varphi(n-k-1)}, 2^{\varphi(n-k-1)+1}$. Moreover, both these multiples will occur  in this range only when $n-k = 1,2,4,8$. Now the proof of the theorem follows if $2(n-k) \leq n-1$. If $n-1 <2(n-k)$ then the injectivity of the map $f^*$ gives $H^j(V_{n,k}) = 0$, and hence $w_j(\xi) = 0$, for $n-1 < j \leq 2(n-k)$. This completes the proof.
\end{proof}

If we assume  $n \geq 2k$ then $n-1 < 2(n-k)$. Then the proof of the following corollary follows from Theorem \ref{tanaka}.
	
\begin{Cor}\label{stunted}
	Let $V_k(\br^n)$ be a Stiefel manifold with $n \geq 2k$. Then $w_i(\xi) = 0$ for $i \leq n-1$ and $i \neq 2^{\varphi(n-k-1)}$ for any vector bundle $\xi$ over $V_k(\br^n)$. \qed	
\end{Cor}

If we fix $k$ and vary $n$ then we have the following corollary. 
 \begin{Cor}
 Let $k$ be fixed. Then except for finitely many values of $n$, the Stiefel-Whitney classes $w_i(\xi)=0$ for  $i\leq n-1$ and any vector bundle $\xi$ over $V_k(\br^n)$.
 \end{Cor}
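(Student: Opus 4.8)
The plan is to reduce the claim to Corollary~\ref{stunted} by showing that for fixed $k$, the inequality $n \geq 2k$ fails only for finitely many $n$, and that outside that finite range the hypothesis $n-k \neq 1,2,4,8$ of Theorem~\ref{tanaka}(1) is automatic. Indeed, the condition $n \geq 2k$ is equivalent to $n-k \geq k$, which holds for all but finitely many $n$ once $k$ is fixed; and $n-k \in \{1,2,4,8\}$ happens for at most four values of $n$. So for all sufficiently large $n$ we are simultaneously in the situation $n \geq 2k$ (so that $n-1 < 2(n-k)$, making Corollary~\ref{stunted} applicable) and in case (1) of Theorem~\ref{tanaka}.

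Concretely, I would first invoke Corollary~\ref{stunted}: for $n \geq 2k$, any vector bundle $\xi$ over $V_k(\br^n)$ has $w_i(\xi) = 0$ for all $i \leq n-1$ with $i \neq 2^{\varphi(n-k-1)}$. The only remaining possibility for a nonzero Stiefel-Whitney class in the range $i \leq n-1$ is therefore $i = 2^{\varphi(n-k-1)}$. Next I would use the observation recorded just before the proof of Theorem~\ref{tanaka}: for a positive integer $m$ one has $2^{\varphi(m-1)} \geq m$, with equality only for $m = 1,2,4,8$. Applying this with $m = n-k$ gives $2^{\varphi(n-k-1)} \geq n-k$, and the equality (which is the only way $2^{\varphi(n-k-1)}$ could be $\leq n-1$, since in general $2^{\varphi(n-k-1)}$ grows roughly like $2^{(n-k)/2}$ and thus exceeds $n-1$ for large $n$) forces $n-k \in \{1,2,4,8\}$. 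Hence for $n - k \notin \{1,2,4,8\}$ and $n$ large enough that $2^{\varphi(n-k-1)} > n-1$, there is no admissible degree at all, so $w_i(\xi) = 0$ for every $i \leq n-1$.

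To package this cleanly, I would let $N_0$ be large enough that (a) $n \geq 2k$ for all $n \geq N_0$, and (b) $2^{\varphi(n-k-1)} > n-1$ for all $n \geq N_0$ — both are satisfied for all but finitely many $n$ since $\varphi(m-1) \to \infty$ and indeed $2^{\varphi(m-1)}$ is eventually much larger than $m$. For $n \geq N_0$ we also automatically have $n - k \geq N_0 - k \notin \{1,2,4,8\}$ once $N_0$ is chosen past $k+8$. Then Corollary~\ref{stunted} leaves only the degree $2^{\varphi(n-k-1)}$, which by (b) exceeds $n-1$ and so lies outside the range under consideration, proving $w_i(\xi) = 0$ for all $i \leq n-1$.

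I do not anticipate a serious obstacle here; this corollary is essentially a bookkeeping consequence of Corollary~\ref{stunted} together with the elementary growth estimate $2^{\varphi(m-1)} \geq m$ (with equality only at $m=1,2,4,8$) already isolated in the text. The only mild care needed is to verify the growth claim quantitatively — namely that $2^{\varphi(m-1)} > m+k-1$ for all sufficiently large $m$ — which follows immediately from $\varphi(m-1) = \lfloor (m-1)/2 \rfloor$ or $\lfloor (m-1)/2 \rfloor + 1$, so that $2^{\varphi(m-1)}$ grows at least like $2^{(m-1)/2}$ and hence dominates any fixed linear function of $m$. This is the one step where one must be slightly explicit, but it is routine.
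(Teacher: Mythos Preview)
Your proposal is correct and follows essentially the same approach as the paper: invoke Corollary~\ref{stunted} and then observe that $2^{\varphi(n-k-1)} > n-1$ for all but finitely many $n$ (since $2^{\varphi(m)}$ grows roughly like $2^{m/2}$). Your detour through the case distinction $n-k \in \{1,2,4,8\}$ is unnecessary---Corollary~\ref{stunted} already covers both cases of Theorem~\ref{tanaka} uniformly---but it does no harm.
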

 \begin{proof}
 The proof follows from Corollary \ref{stunted} by using the fact that $n-1<2^{\varphi(n-k-1)}$ except for finitely many values of $n.$
  \end{proof} 

%
%
%
%
%
  
  In view of Theorem \ref{tanaka}, it will be interesting to know whether there exists a vector bundle $\xi$ over $V_k(\br^n)$ such that $w_{2^{\varphi(n-k-1)}}(\xi) \neq 0$. We have complete answer when $2^{\varphi(n-k-1)} = n-k$. We observed in the above proof that $2^{\varphi(n-k-1)}=n-k$ if and only if $n-k=1,2,4,8.$ In the case when $n-k = 1,2$, the existence of a vector bundle $\xi$ such that $w_{n-k}(\xi) \neq 0$ is a consequence of the fact that $H^1(V_k(\br^{k+1});\bz_2) \neq 0$ and the mod-2 reduction map $H^2(V_k(\br^{k+2}); \bz) \rightarrow H^2(V_k(\br^{k+2});\bz_2)$ is the projection map $\bz \rightarrow \bz_2$ (cf. \cite{knt}). In the following example, when $n-k=4,8$ we construct a vector bundle $\xi$ over $V_k(\br^n)$ such that $w_{n-k}(\xi)\neq 0.$
  
 \begin{Exm}\label{exmp} \em{
  Let $\alpha: Spin(n) \rightarrow V_k(\br^n)$ be the principal $Spin(n-k)$-bundle over $V_k(\br^n)=Spin(n)/Spin(n-k)$. If $\wt{RO}Spin(n-k)$ and $\wt{R}Spin(n-k)$ are the reduced real and complex representation rings respectively, then we have the following commutative diagram: 
  
  \begin{equation}\label{diag1}
  \xymatrix{
  \wt{RO}Spin(n-k) \ar[r]\ar[d] & \wt{KO}(Spin(n)/Spin(n-k)) \ar[ld]^{f^*}  \\
   \wt{KO}(Spin(n-k+1)/Spin(n-k)) &
}
  \end{equation}  
 Here $f: S^{n-k} = Spin(n-k+1)/Spin(n-k) \rightarrow Spin(n)/Spin(n-k)$ is the natural inclusion.  
  In the case when $n-k = 8$, the map $\wt{RO}Spin(8) \rightarrow \wt{KO}(S^8)$ in Diagram \ref{diag1} 
is surjective (cf. p.195, \cite{husemoller}) and hence the map $f^*$ is surjective If $[\xi] \in \wt{KO}(\mathbb{S}^8)$ is the class of the Hopf bundle over $S^8$ then there exists a bundle $\eta$ over $V_k(\br^n)$ such that $f^*([\eta]) = [\xi]$. As $w_8(\xi) \neq 0$, we have $w_8(\eta) \neq 0$.

 Next when $n-k =4$, we use the following diagram: \begin{equation} \label{diag2}
  \xymatrix{
  \wt{R}Spin(n-k) \ar[r]\ar[d] & \wt{K}(Spin(n)/Spin(n-k)) \ar[ld]^{f^*}  \\
   \wt{K}(Spin(n-k+1)/Spin(n-k)) &
}
  \end{equation} 
The map $\wt{R}Spin(4) \rightarrow \wt{K}(S^4)$ in Diagram \ref{diag2}
is surjective (cf. p.195, \cite{husemoller}). Using the fact that the Hopf bundle $\xi$ over $S^4$ is a complex vector bundle with $w_4(\xi) \neq 0$, we proceed as above to conclude that there exists a complex vector bundle $\eta$ over $V_k(\br^n)$ such that the Stiefel-Whitney class $w_4(\eta_{\br})$ of the underlying real bundle $\eta_{\br}$ is nonzero.

}

\end{Exm}

\section{Proof of Theorem \ref{main}}
Recall the description of the cohomology ring $H^*(V_k(\br^n))$ as in the Introduction. Because of the relations among the generators $a_{n-k}, a_{n-k+1}, \cdots, a_{n-1}$, we can write any nonzero cohomology class $x \in H^j(V_{k}(\br^n))$ as $$x = \sum a_{i_1}\cdot a_{i_2}\cdots a_{i_r}$$ such that $i_{t} < i_{t+1}$. If a monomial $a_{i_1}\cdot a_{i_2}\cdots a_{i_r}$ in the above summand represents a nonzero cohomology class then  we have $$(n-k) +(t-1) \leq \deg a_{i_t} \leq n-1 -r +t.$$  This implies that
$$\sum_{t =1}^{r} (n-k) +(t-1) \leq \sum_{t=1}^{r} a_{i_t} \leq \sum_{t=1}^r n -1 -r +t.$$ Hence,
$r(n-k) + r(r-1)/2 \leq j \leq r(n-1) - r(r-1)/2$. 

For $0\leq p\leq k$, we define $T_p$ as  the set $\{j\in \mathbb{N}: p(n-k) + p(p-1)/2 \leq j\leq p(n-1) - p(p-1)/2\}$. Therefore, by the above discussion we have the following lemma. 

\begin{Lem}\label{monomial}If $x = a_{i_1}\cdot a_{i_2}\cdots a_{i_r}$ with $i_{t} < i_{t+1}$ represents a nonzero cohomology class of $V_k(\br^n)$, then $\deg x  \in T_r$. \qed
\end{Lem}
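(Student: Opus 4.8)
The plan is to deduce the lemma directly from Borel's structure theorem for $H^*(V_k(\br^n))$ recalled in the Introduction, together with an elementary arithmetic-series estimate. Since $a_{n-k}, a_{n-k+1}, \ldots, a_{n-1}$ form a simple system of generators, the monomials $a_{i_1} a_{i_2} \cdots a_{i_r}$ with $n-k \le i_1 < i_2 < \cdots < i_r \le n-1$ form a $\bz_2$-vector-space basis of $H^*(V_k(\br^n))$. Hence the hypothesis that $x = a_{i_1} \cdots a_{i_r}$ with $i_t < i_{t+1}$ represents a \emph{nonzero} class means precisely that $x$ is one of these basis elements; in particular $r \le k$ and each subscript satisfies $n-k \le i_t \le n-1$.

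First I would record the per-factor bounds. Because $i_1 < i_2 < \cdots < i_r$ are integers with $i_1 \ge n-k$, we get $i_t \ge (n-k) + (t-1)$; because $i_r \le n-1$, we get $i_t \le (n-1) - (r-t)$. Equivalently, $\deg a_{i_t} = i_t$ lies in the interval $[(n-k)+(t-1),\ n-1-r+t]$, which is the displayed inequality in the paragraph preceding the lemma. Next I would sum these inequalities over $t = 1, \ldots, r$: the lower bound gives $\deg x = \sum_{t=1}^r i_t \ge r(n-k) + \sum_{t=1}^{r}(t-1) = r(n-k) + \tfrac{r(r-1)}{2}$, and the upper bound gives $\deg x \le r(n-1) - r^2 + \sum_{t=1}^r t = r(n-1) - \tfrac{r(r-1)}{2}$. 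By the definition of $T_r$ (taking $p=r$, which is legitimate since $0 \le r \le k$), this says exactly $\deg x \in T_r$, completing the proof.

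There is no serious obstacle here: the content is entirely the bookkeeping of the two arithmetic series. The only point requiring care is invoking the simple-system-of-generators property correctly, so that ``nonzero monomial with strictly increasing subscripts'' is identified with ``basis element'', which is what pins down the range $n-k \le i_t \le n-1$ and the bound $r \le k$ that the estimate relies on.
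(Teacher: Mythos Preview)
Your proposal is correct and follows essentially the same argument as the paper: the paper's proof is the short paragraph immediately preceding the lemma, where exactly the same per-factor bounds $(n-k)+(t-1)\le \deg a_{i_t}\le n-1-r+t$ are stated and then summed over $t$ to obtain $r(n-k)+r(r-1)/2\le \deg x\le r(n-1)-r(r-1)/2$. Your write-up is slightly more explicit about why the simple-system-of-generators hypothesis pins down $n-k\le i_t\le n-1$ and $r\le k$, but the content is identical.
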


If we assume $n > k(k+4)/4$, then in the following lemma we give an upper bound for the length of each $T_p$.
\begin{Lem}\label{difference}
Let $n > k(k+4)/4$. Then $|r_1 -r_2| < n-k$ for any $p$ and  $r_1, r_2 \in T_p$.

\end{Lem}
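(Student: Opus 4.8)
The plan is to compute the length of the interval $T_p$ directly from its definition and show it is bounded by $n-k-1$ under the hypothesis $n > k(k+4)/4$. By definition, $T_p = \{j \in \mathbb{N} : p(n-k) + p(p-1)/2 \leq j \leq p(n-1) - p(p-1)/2\}$, so for $r_1, r_2 \in T_p$ we have
$$|r_1 - r_2| \leq \bigl(p(n-1) - p(p-1)/2\bigr) - \bigl(p(n-k) + p(p-1)/2\bigr) = p(k-1) - p(p-1) = p(k-p).$$
Thus it suffices to verify that $p(k-p) < n-k$ for every $p$ with $0 \leq p \leq k$.

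The quantity $p(k-p)$ is a downward parabola in $p$ maximized at $p = k/2$, where it takes the value $k^2/4$. Hence $p(k-p) \leq k^2/4$ for all admissible $p$, and the hypothesis $n > k(k+4)/4 = k^2/4 + k$ gives exactly $k^2/4 < n - k$. Chaining these, $|r_1 - r_2| \leq p(k-p) \leq k^2/4 < n-k$, which is the claim. (One may optionally remark that when $k$ is odd the maximum of $p(k-p)$ over integers is $(k^2-1)/4$, slightly sharper, but this is not needed.)

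I do not anticipate a genuine obstacle here: the lemma is a one-line optimization once the endpoints of $T_p$ are read off. The only points requiring a moment's care are that the bound $p(k-p) \leq k^2/4$ holds over the real interval $[0,k]$ (so in particular over the integers in that range), and that the inequality $n > k(k+4)/4$ is used in its strict form to get the strict inequality $|r_1 - r_2| < n-k$. I would present it as a short direct computation.
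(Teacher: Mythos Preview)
Your proposal is correct and is essentially identical to the paper's own proof: both compute the width of $T_p$ as $p(k-p)$, bound this by $k^2/4$ via the vertex of the parabola, and then invoke the hypothesis $n>k(k+4)/4$ to conclude $k^2/4<n-k$. The paper also notes the sharper integer maximum $(k^2-1)/4$ in the odd case, just as you remark parenthetically.
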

\begin{proof}
For any $r_1,r_2\in T_p$, we have $|r_1-r_2|\leq p(n-1)-p(p-1)/2-p(n-k)-p(p-1)/2=p(k-p).$ The maximum value of the set $\{p(k-p):1\leq p\leq k\}$ is $k^2/4$ if $k$ is even and $(k^2-1)/4$ if $k$ is odd. Since $n>k(k+4)/4$ if and only if $n-k>k^2/4,$ we have $|r_1-r_2|<n-k.$
\end{proof}

In the following lemma, we derive some results involving binomial coefficients which we shall use in the proof of Theorem \ref{main}.

\begin{Lem}\label{binomial}
Let $s$ be an odd number and  $r \leq 2^t$. Then the binomial coefficients	
\begin{enumerate}
 \item ${2^ts+r-1\choose r}$
is even if and only if $r\neq 0,2^{t}$. 
\item[]
\item ${2^ts-1\choose 2^{t+1}}$ is odd if $s \equiv 3 \pmod 4$.
\end{enumerate}
\end{Lem}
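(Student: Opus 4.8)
The plan is to reduce everything to Kummer's theorem / Lucas' theorem on 2-adic valuations of binomial coefficients. For part (1), I would write $r$ in the range $0 \le r \le 2^t$ and consider the binary addition $(2^t s + r - 1) = r + (2^t s - 1)$. Since $s$ is odd, $2^t s - 1$ has binary expansion consisting of $t$ consecutive $1$'s in positions $0,1,\dots,t-1$, followed by the binary expansion of $s-1$ shifted up by $t$ (and $s-1$ is even, so position $t$ is actually $0$ — more precisely the bits of $2^t s$ in positions $\ge t$, minus one in position $t$... let me just say: positions $0$ through $t-1$ are all $1$). By Kummer's theorem, $\binom{2^t s + r - 1}{r}$ is odd iff adding $r$ to $2^t s - 1$ in base $2$ produces no carries. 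If $0 < r < 2^t$, then $r$ has a nonzero bit in some position $j \le t-1$, which collides with the $1$ already sitting there in $2^t s - 1$, forcing a carry; hence the coefficient is even. If $r = 0$ the coefficient is $1$, odd. If $r = 2^t$, then $r$ has its only nonzero bit in position $t$; since positions $0,\dots,t-1$ of $2^t s-1$ are $1$ and we add nothing there, and position $t$ of $2^ts - 1$ is $0$ (because $2^t s - 1 = 2^t(s-1) + (2^t - 1)$ and $s-1$ is even), there is no carry, so the coefficient is odd. This gives the ``if and only if''.

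For part (2), I would again use Kummer's theorem on $2^{t+1} + (2^t s - 1)$ and check the carry pattern, now using the hypothesis $s \equiv 3 \pmod 4$. Write $s = 4a + 3$. Then $s - 1 = 4a + 2 = 2(2a+1)$, so $s-1$ has a $0$ in bit $0$ and a $1$ in bit $1$; hence $2^t s - 1 = 2^t(s-1) + (2^t - 1)$ has bits $1$ in positions $0,\dots,t-1$, a $0$ in position $t$, and a $1$ in position $t+1$. Adding $2^{t+1}$ (a single bit in position $t+1$) collides with that $1$ in position $t+1$, producing a carry into position $t+2$ — so naively this suggests the coefficient is \emph{even}, which is the opposite of what is claimed, so I must have the digit of $2^ts-1$ in position $t+1$ wrong; the correct reading is that I should instead analyze $\binom{2^ts - 1}{2^{t+1}}$ via Lucas directly: it is odd iff every binary digit of $2^{t+1}$ is $\le$ the corresponding digit of $2^t s - 1$, i.e. iff bit $t+1$ of $2^ts-1$ equals $1$. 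Bit $t+1$ of $2^t s - 1$ equals bit $1$ of $s - 1$. With $s \equiv 3 \pmod 4$ we get $s - 1 \equiv 2 \pmod 4$, so bit $1$ of $s-1$ is $1$, and bit $0$ is $0$; all lower bits (positions $0,\dots,t-1$ of $2^ts-1$) are $1$ and positions $0,\dots,t-1$ of $2^{t+1}$ are $0$, so those impose no constraint; position $t$ of $2^{t+1}$ is $0$; position $t+1$ of $2^{t+1}$ is $1$ and we have just seen position $t+1$ of $2^ts-1$ is $1$. Hence by Lucas the coefficient is odd.

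The main obstacle — and the thing to get exactly right — is the careful bookkeeping of the binary digits of $2^t s - 1$, especially the digits in positions $t$ and $t+1$, which depend on $s \bmod 2$ and $s \bmod 4$ respectively; the rest is a mechanical application of Lucas' (or equivalently Kummer's) theorem. I would present the argument by first recording the binary expansion of $2^ts-1$ as a lemma-internal observation (``positions $0,\dots,t-1$ are $1$, and position $t+j$ agrees with position $j$ of $s-1$''), and then reading off both parts from Lucas' theorem with minimal further computation.
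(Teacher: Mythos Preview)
Your argument is correct but follows a genuinely different route from the paper's. The paper establishes both parts by writing the binomial coefficient as an explicit product of fractions and tracking $2$-adic valuations factor by factor: for (1) it isolates the factor $2^ts/r$ and checks that the remaining factors are ratios of odd integers, so the parity is governed entirely by whether $2^ts/r$ is a ratio of odd integers (which happens exactly when $r=2^t$); for (2) it performs a similar decomposition of $\binom{2^ts-1}{2^{t+1}}$ and reduces the parity question to whether $(s-1)(s-2)/2$ is odd, which it is when $s\equiv 3\pmod 4$. Your approach via Lucas' theorem is cleaner: once you record that $2^ts-1$ has bits $0,\dots,t-1$ equal to $1$ and that bit $t+j$ equals bit $j$ of $s-1$, both statements become one-line digit comparisons. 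The paper's method is more elementary in that it invokes no named theorem, but it requires more careful bookkeeping in the product; your method trades that for a single appeal to Lucas. One presentational point: your first pass at (2) misapplies Kummer by looking at the sum $2^{t+1}+(2^ts-1)$ rather than $2^{t+1}+\bigl((2^ts-1)-2^{t+1}\bigr)$; since you immediately abandon this for the correct Lucas argument, just omit the false start in a clean write-up.
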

\begin{proof}
To prove Statement (1), we note that if $r \neq 0$ then \scriptsize $${2^{t}s+r-1\choose r} =  \left(\frac{2^ts }{r}\right)\left(  \prod_{l=1}^{[(r-1)/2]}\frac{2^ts +2}{2l}\right) \left(  \prod_{l=1}^{[r/2]}\frac{2^ts +2l-1}{2l-1}\right)$$\normalsize
Now it is easy to see that the second and third products in the right hand side of the above equality can be written as ratios of two odd integers. Further, $(2^ts/r)$ can be written as a ratio of two odd integers if and only if $r = 2^t$. From here we conclude Statement (1).

  Next we prove Statement (2). We first note that
\scriptsize	$${2^ts-1\choose2^{t+1}}= \left(\prod_{l=1}^{2^t}\frac{2^ts-2l}{2l} \right) \left(\prod_{l=1}^{2^t}\frac{2^ts-2l -1}{2l-1} \right)$$ \normalsize
Now  if $ l \neq 2^{t-1} \mbox{ or } 2^t$, then $\frac{2^ts-2l}{2l}$ can be written as a ratio of two odd integers. On the other hand if $l = 2^{t-1} \mbox { and } 2^t$ then the product \scriptsize
$$\left(\frac{2^ts-2^t}{2^t}\right)\left(\frac{2^ts-2^{t+1}}{2^{t+1}}\right)=(s-1)(s-2)/2,$$
\normalsize
 which is an odd number as $s \equiv 3 \pmod 4$. This completes the proof of Statement (2).   \end{proof}

%

%
%

 We now prove Theorem \ref{main}.
	 \begin{proof}[\textbf{Proof of Theorem \ref{main}}]
	Let $i = 2^{q+t_1}+2^{q+t_2}+ \cdots+2^{q+t_m}$ with $t_j\geq0$ and $t_j<t_{j+1}$. If $i$ is a power of 2 (i.e.,  when $m =1$) or $H^{i}(V_k(\br^n))=0$, then the first statement of the theorem follows easily. Next we assume that $m>1$ and $H^{2^qr}(V_k(\br^n))\neq0$. By Wu's formula we get
	 \scriptsize	$$\begin{array}{ccl}
	 	Sq^{2^{q+t_1}}(w_{i-2^{q+t_1}}(\xi))& = &\displaystyle\sum_{r=0}^{2^{q+t_1}}{i-2^{q+t_1+1}+r-1\choose r}w_{2^{q+t_1}-r}(\xi)\cdot w_{i-2^{q+t_1}+r}(\xi)\\\\
	 	& =& w_{2^{q+t_1}}(\xi)\cdot w_{i-2^{q+t_1}}(\xi)+w_{i}(\xi).
	 	\end{array}$$ \normalsize 
	 	The last equality above follows by Lemma \ref{binomial}(1). 	
	 	Next we prove that the left hand side of the above equation is zero. For this it is enough to prove that if $x = a_{i_1} \cdot a_{i_2}  \cdots a_{i_{p}}$, with $i_j < i_{j+1}$,  is a nonzero cohomology class of degree $i-2^{q+t_1}$, then the Steenrod square $Sq^{2^{q+t_1}}(x)=0$. For this first note that $$ Sq^{2^{q+t_1}}(x) =Sq^{2^{q+t_1}}(a_{i_1} \cdot a_{i_2}  \cdots a_{i_{p}})=\sum_{l_1 + \cdots + l_{p} = 2^{q+t_1}}  Sq^{l_1}(a_{i_1}) \cdots Sq^{l_{p}}(a_{i_{p}}).$$  We shall show that each summand in the right hand side of the above equation is zero.
	 	As the monomial $a_{i_1} \cdot a_{i_2}  \cdots a_{i_{p}}$ represents a nonzero cohomology class, it follows by Lemma \ref{monomial} that its degree, $i-2^{q+t_1} \in T_{p}$.  If a summand $ Sq^{l_1}(a_{i_1}) \cdots Sq^{l_{p}}(a_{i_{p}})$ is nonzero then 
	 	for all $j$  we have $l_j + i_j \leq n-1$, $Sq^{l_j}(a_{i_j}) = a_{{i_j} + l_j}$. Moreover, as $n\geq 2k$, we have $a_{i_j}^2 =0$ for all $j$ and this will imply that $l_{j_1} +i_{j_1} \neq l_{j_2} +i_{j_2}$ for $j_1 \neq j_2$. Hence, $$
	 	p(n-k) +p(p-1)/2 \leq \sum_{j=1}^{p} i_j + l_j = i \leq p(n-1) - p(p-1)/2.$$
	 	This implies that $i \in T_{p}$. Since, $i-2^{q+t_1}$ also belongs to $T_p$, the difference, $i - (i-2^{q+t_1})= 2^{q+t_1}  \geq 2^q \geq n-k$, gives a contradiction to Lemma \ref{difference} and hence, we conclude that  $Sq^{2^{q+t_1}}(x)=0$.  This proves that $w_{i}(\xi)= w_{2^{q+t_1}}(\xi)\cdot w_{i-2^{q+t_1}}(\xi).$ Now the proof of  the first statement follows by induction on $m$.
	 	
	 	Now we prove the last statement of the theorem by applying induction on the set $\{i:i \text{ is not a multiple of }2^q\}$. If $i<2^q$, then $w_i(\xi)=0$ by hypothesis.  Next assume that $i > 2^q$, $H^i(V_k(\br^n) \neq 0$ and $i$ is not a multiple of $2^q$. We can write $i$ as $i=2^ts$ where $s$ is odd, $s\geq3$ and $t<q$. Applying Lemma \ref{binomial}(1) on Wu's formula we get 
	 	$$Sq^{2^t}(w_{2^t(s-1)}(\xi))=w_i(\xi).$$  
	 	If $2^t(s-1)$ is not a multiple of $2^q$ or $H^{2^t(s-1)}(V_k(\br^n))=0$, then by induction we have $w_i(\xi)=0.$  Now assume that $H^{2^t(s-1)}(V_k(\br^n))\neq0$ and $2^t(s-1)$ is a multiple of $2^q$. Let $2^t(s-1) = 2^{q+t_1} + 2^{q+t_2} + \cdots + 2^{q+t_m}$ with $t_j < t_{j+1}$. We have the following two cases:
	 	
	 	\textbf{Case $m>1$:}	 	
As, $2^t(s-1)$ is a multiple of $2^q$, by the first statement of the theorem, we have
	 	\begin{equation}\label{eqn2}	 	
	\begin{array}{ccl}w_{i}(\xi) &= &Sq^{2^t}(w_{2^{q+t_1}}(\xi)\cdot w_{2^{q+t_2}}(\xi)\cdots w_{2^{q+t_m}}(\xi))\\
	 	 & &\\	 	
	 	&=& \underset{l_1+\cdots l_m=2^t}{\sum}Sq^{l_1}(w_{2^{q+t_1}}(\xi))\cdots Sq^{l_m}(w_{2^{q+t_m}}(\xi)).\end{array}
	 		\end{equation}		
	 	Now observe that for each $j$, we have $l_j \leq 2^{t} < 2^q$, and hence, the Steenrod square, $$Sq^{l_j}(w_{2^{q+t_j}}(\xi))={2^{q+t_j}-1\choose l_j}w_{2^{q+t_j}+l_j}(\xi).$$ As $m>1$, we have $2^{q+t_j}< 2^t(s-1)$. Therefore $2^{q+t_j}+l_j<2^ts$. Further if for some $j$, we have $l_j>0$, then $2^{q+t_j}+l_j$ is not a multiple of $2^q$ and hence by induction, $w_{2^{q+t_j}+l_j}(\xi)=0.$ Now observe that in each summand in the right hand side of Equation \ref{eqn2}, there is at least one $l_j$ such that $l_j>0.$ Therefore, $$w_{2^ts}(\xi)=\sum_{l_1+\cdots l_m=2^t}Sq^{l_1}(w_{2^{q+t_1}}(\xi))\cdots Sq^{l_m}(w_{2^{q+t_m}}(\xi))= 0.$$
	 	
	 \textbf{Case $m=1$:} In this case, $2^t(s-1) = 2^{q+t_1}$ for some $t_1 \geq 0$. Thus $s-1$ is a power of $2.$ First we consider the case when $s\geq 5.$ Here we observe that $2^{t+1} < 2^ts - 2^{t+1}.$ Since, $s-1 \equiv 0 \pmod 4$, then by Lemma \ref{binomial}(2), we have $$	Sq^{2^{t+1}}(w_{2^ts-2^{t+1}}(\xi))=w_{2^{t+1}}(\xi)\cdot w_{2^ts-2^{t+1}}(\xi)+w_{2^ts}(\xi).$$
	 	As $2^ts-2^{t+1}=2^t(s-1) -2^t = 2^{q+t_1} -2^t = 2^t(2^{q-t+t_1}-1)$, we have that $2^ts-2^{t+1}$ is not a multiple of $2^q$ and hence, by induction, $w_{2^ts-2^{t+1}}(\xi) = 0$.
	 	Therefore, $w_{2^ts}(\xi) = 0$. Next we deal with the case $s = 3$. We observe that in this case, $t = q-1$ and $t_1 =0$. Thus, we obtain
	 	$$Sq^{2^{q-1}}(w_{2^q}(\xi))=w_{2^{q-1}3}(\xi) = w_i(\xi).$$
	 If $2^q \in T_1$, then $w_{2^q}(\xi) = a_j$ for some $j$, as $n \geq 2k$. Therefore $w_i(\xi) = Sq^{2^{q-1}}(a_j)$. So $w_i(\xi) = 0$ if $i > n-1$. If $i \leq n-1$, then $w_i(\xi)$ is again zero, as $w_{2^q}(\xi)$ is the only nonzero Stiefel-Whitney class up to degree $n-1$ (cf. Corollary \ref{stunted}). Next we assume that $2^q \not \in T_1$. Since $w_{2^q}(\xi) \neq 0$, we have that $2^q \in T_p$ for some $p$ such that $p \geq 2$.	To prove $w_{2^{q-1}3}(\xi)=0$,  we consider the  Steenrod square operation,
	 	\begin{equation}\label{eqn}
	 	Sq^{2^{q-1}}(a_{i_1} \cdot a_{i_2}  \cdots a_{i_{p}})=\sum_{l_1 + \cdots + l_{p} = 2^{q-1}}  Sq^{l_1}(a_{i_1}) \cdots Sq^{l_{p}}(a_{i_{p}})
	 	\end{equation}
	 	 on a monomial $a_{i_1} \cdot a_{i_2}  \cdots a_{i_{p}}$ such that $i_j<i_{j+1}$, which represents a nonzero cohomology class of degree $2^q$. By Lemma \ref{monomial}, the degree of the monomial, $2^q \in T_p$. If a summand $ Sq^{l_1}(a_{i_1}) \cdots Sq^{l_{p}}(a_{i_{p}})$ is nonzero then 
	 	for all $j$  we have $l_j + i_j \leq n-1$, $Sq^{l_j}(a_{i_j}) = a_{{i_j} + l_j}.$ Moreover, as $n\geq 2k$, we have $l_{j_1} +i_{j_1} \neq l_{j_2} +i_{j_2}$ for $j_1 \neq j_2$. Hence, $$
	 	p(n-k) +p(p-1)/2 \leq \sum_{j=1}^{p} i_j + l_j = 2^{q-1}3 \leq p(n-1) -p(p-1)/2.$$
	 	This implies that $2^{q-1}3 \in T_{p}$. As $p \geq 2$, we have $2^q \geq 2(n-k)+1$ and 
	 	 this implies that $2^{q-1} > n-k$. Therefore, the difference $2^{q-1}3 - 2^{q}   = 2^{q-1}>n-k$. This is a contradiction to Lemma \ref{difference}.  This shows that each summand in the right hand side of Equation \ref{eqn} is zero. Hence $w_{2^{q-1}3}(\xi) =0$ if $p \geq 2$. 
	 	
	 	%
	 	%
	 	%
	 	%

	 	%
	 	%
	 \end{proof}

\subsection*{Acknowledgment:} The authors thank Aniruddha C. Naolekar and Parameswaran Sankaran for their valuable suggestions and comments.

\end{document}